\newtheorem{theorem}{Theorem}[section]
\newtheorem{proposition}[theorem]{Proposition}
\newtheorem{lemma}[theorem]{Lemma}
\newtheorem{remark}[theorem]{Remark}
\theoremstyle{definition}
\newtheorem*{definition*}{Definition}
\newtheorem{definition}[theorem]{Definition}}
\newtheorem*{proposition*}{Proposition}
\newtheorem*{corollary*}{Corollary}
\newtheorem*{lemma*}{Lemma}
\newtheorem*{remark*}{Remark}
\definecolor{dgreen}{rgb}{0.13,0.7,.63}
\def\daniele#1 {\fbox {\footnote {\ }}\ \footnotetext { From Daniele: {\color{red}#1}}}
\def\maria#1 {\fbox {\footnote {\ }}\ \footnotetext { From Maria: {\color{blue}#1}}}
\def\giovanni#1 {\fbox {\footnote {\ }}\ \footnotetext { From Giovanni: {\color{green}#1}}}
\title{Bent functions from triples of permutation polynomials}
\author{Daniele Bartoli}
\address{Dipartimento di Matematica e Informatica, Universit\`a degli Studi di Perugia, Perugia,  Italy}
\email {daniele.bartoli@unipg.it}
\author{Maria Montanucci}
\address{Department of Applied Mathematics and Computer Science, Technical University of Denmark, Kongens Lyngby, Denmark}
\email {marimo@dtu.dk}
\author{Giovanni Zini}
\address{Dipartimento di Matematica e Applicazioni, Universit\`a degli Studi di Milano-Bicocca, Milano, Italy}
\email {giovanni.zini@unimib.it}
\date{}
\begin{document}

\begin{abstract}
We provide constructions of bent functions using triples of permutations. This approach is due to Mesnager. In general,  involutions have been mostly considered in such a machinery; we provide some other suitable triples of permutations, using monomials, binomials,  trinomials, and quadrinomials. 
\end{abstract}

\maketitle

{\bf Keywords:} Permutation; Boolean function; bent function; monomial, binomial, trinomial. \\

\section{Introduction}
Bent functions are Boolean functions with an even number of variables which achieve the maximum possible nonlinearity. They have been introduced by  Rothaus \cite{Rothaus1976} and Dillon \cite{Dillon1974} in the Seventies and they have interesting applications in Combinatorics and Design Theory (difference sets and symmetric designs), Coding Theory (several types of codes such as Reed-Muller codes \cite{14} and  two-weight codes \cite{2}), Cryptography \cite{8},  Sequence Theory \cite{30}, Graph Theory \cite{31}. 

A complete classification of bent functions seems to be hopeless, although in the last years explicit constructions of such functions have been provided, using different techniques. For a more detailed introduction on this topic we refer to \cite{CM2016,Mesnager2016_b,Carlet2006,Carlet2014,CM2011,Kyureghyan2011} and the references therein.

Some recent constructions of bent functions rely on an approach due to Mesnager \cite{Mesnager2014}, where triples of permutations satisfying particular constraints are needed; see Definition \ref{Def:Phi}. In general,  involutions have been mostly considered in such a machinery; see \cite{CM2018,MCM2015,CMS2016}. 

In this paper, we provide some other families of permutations which can be used to construct bent functions. In particular, we use triples of monomials which are not all involutions (see Theorem  \ref{Prop:Fam1}), monomials and binomials (see Theorem \ref{Prop:Fam2}), monomials and trinomials  (see Theorem \ref{Prop:Fam3}), binomials of order four (see Theorem \ref{Prop:Fam2}). 

\section{Bent functions and permutations}

Let $q=2^n$, $n\in \mathbb{N}$, and denote by $\mathbb{F}_{q}$ the finite field with $q$ elements. A \emph{boolean function} on $\mathbb{F}_q$ is a map from $\mathbb{F}_q$ to $\mathbb{F}_2$. On the one hand each boolean function can be represented as a polynomial $f(x)=\sum_{j=0}^{q-1} a_j x^j \in \mathbb{F}_q[x]$. On the other hand a polynomial $f(x)$ of this shape corresponds to a boolean function if and only if $a_0,a_{q-2}\in \mathbb{F}_2$ and $a_{2j\pmod {q-1}}=a_j^2$ for all $j \neq 0,q-1$. In this case $f(x)$ is the polynomial form of the boolean function; see \cite{8}.

In the case $n$ even, boolean functions can be described by a bivariate representation. Let $n=2m$ and identify $\mathbb{F}_{q}$ with $\mathbb{F}_{2^m}\times \mathbb{F}_{2^m}$. Each element of $\mathbb{F}_{q}$ can be uniquely described by a pair $(x,y)\in \mathbb{F}_{2^m}^2$. Then, each polynomial function $f$ over $\mathbb{F}_{q}$  can be  determined, by Lagrange interpolation, by a unique bivariate polynomial
$$\sum_{0\leq i, \ j \leq 2^m-1}a_{i,j}x^iy^j\in \mathbb{F}_{2^m}[x,y].$$ 

Each Boolean function can be written (non-uniquely) in the form 
$f(x,y)=Tr^{m}_1(P(x,y))$, where $P(x,y)\in \mathbb{F}_{2^m}[x,y]$ and $Tr^m_1$ denotes the trace function from $\mathbb{F}_{2^m}$ to $\mathbb{F}_2$ defined by $Tr^m_1(x)= x^{2^{m-1}}+x^{2^{m-2}}+\cdots+x^2+x$.

For a boolean function $f$ on $\mathbb{F}_{2^n}$, its Walsh-Hadamard transform $\widehat{\chi_f}:\mathbb{F}_{2^n}\rightarrow\mathbb{Z}$ is defined as follows:

$$
\widehat{\chi_f}(z)=\sum_{x \in \mathbb{F}_{2^n}} (-1)^{f(x)+Tr^n_1(zx)}.$$

\begin{definition}\label{Def:Bent}
Let $n\in \mathbb{N}$ be even. A Boolean function $f$ on $\mathbb{F}_{2^n}$ is called \emph{bent} if its Walsh-Hadamard transform satisfies $ \widehat{\chi_f}(z)=\pm 2^{n/2}$ for all $z \in \mathbb{F}_{2^n}$.
\end{definition}

Bent functions of the (bilinear) form $Tr^m_1(\Phi(y)x)+g(y)$, where $(x,y) \in \mathbb{F}_{2^m}\times \mathbb{F}_{2^m}$, $m\geq 1$, $\Phi$ is a permutation of $\mathbb{F}_{2^m}$ and $g$ is any Boolean function over $\mathbb{F}_{2^m}$, are called Maiorana-McFarland bent functions; see \cite{McFarland1973}.

In this paper we construct bent functions following the approach used in \cite{CM2018,MCM2015}. The ingredients are permutations of $\mathbb{F}_{2^n}$ satisfying the following property ($\mathcal{A}_n$).

\begin{definition}\cite{CM2018,Mesnager2016_a}\label{Def:Phi}
Let $n$ be a positive integer and $q=2^n$. Three permutations $\Phi_1$, $\Phi_2$ and $\Phi_3$ of $\mathbb{F}_q$ are said to satisfy ($\mathcal{A}_n$) if the following two conditions hold:
\begin{enumerate}
\item their sum $\Psi = \Phi_1+\Phi_2+\Phi_3$ is a permutation of $\mathbb{F}_q$;
\item $\Psi^{-1} = \Phi_1^{-1}+\Phi_2^{-1}+\Phi_3^{-1}$.
\end{enumerate}
\end{definition}

In Section \ref{Sec:Main} we will construct triples of permutations satisfying property ($\mathcal{A}_n$). 

The key result to construct bent functions from such triples is the following theorem.

\begin{theorem}\cite{Mesnager2014}\label{Th:Costruzione}
Let $n$ be a positive integer and $q=2^n$. Consider three permutations $\Phi_1$, $\Phi_2$, $\Phi_3$ of $\mathbb{F}_{q}$. Then the Boolean function
\begin{eqnarray*}
g(x, y) &=& Tr^n_1(x\Phi_1(y))Tr^n_1(x\Phi_2(y))\\
&&+Tr^n_1(x\Phi_2(y))Tr^n_1(x\Phi_3(y))\\
&&+Tr^n_1(x\Phi_1(y))Tr^n_1(x\Phi_3(y))
\end{eqnarray*}
 is bent over $\mathbb{F}_q\times \mathbb{F}_q$ if and only if $\Phi_1$, $\Phi_2$, $\Phi_3$ satisfy property ($\mathcal{A}_n$).
 \end{theorem}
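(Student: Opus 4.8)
The plan is to evaluate the Walsh--Hadamard transform $\widehat{\chi_g}(a,b)$ at an arbitrary point $(a,b)\in\F_q\times\F_q$ and to identify exactly when all these values equal $\pm 2^n$; recall that $\F_q\times\F_q$ has $2^{2n}$ elements, so this is precisely what it means for $g$ to be bent.

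The first ingredient I would use is the elementary identity, valid for all $t_1,t_2,t_3\in\F_2$,
$$(-1)^{t_1t_2+t_2t_3+t_1t_3}=\frac{1}{2}\Big((-1)^{t_1}+(-1)^{t_2}+(-1)^{t_3}-(-1)^{t_1+t_2+t_3}\Big),$$
proved by checking the four cases according to how many $t_i$ equal $1$. Taking $t_i=Tr^n_1(x\Phi_i(y))$ and writing $\Psi=\Phi_1+\Phi_2+\Phi_3$, so that $t_1+t_2+t_3=Tr^n_1(x\Psi(y))$, this expresses $(-1)^{g(x,y)}$ as a signed combination of four characters that are \emph{linear} in $x$. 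Plugging into the definition of $\widehat{\chi_g}$ and summing over $x$ first, each inner sum $\sum_{x\in\F_q}(-1)^{Tr^n_1(x(F(y)+a))}$ equals $q$ if $F(y)=a$ and $0$ otherwise, which collapses the double sum to
$$\widehat{\chi_g}(a,b)=\frac{q}{2}\Big(\sigma_{\Phi_1}+\sigma_{\Phi_2}+\sigma_{\Phi_3}-\sigma_{\Psi}\Big),\qquad \sigma_F:=\sum_{y\,:\,F(y)=a}(-1)^{Tr^n_1(by)};$$
when $F$ is a permutation this is simply $\sigma_F=(-1)^{Tr^n_1(bF^{-1}(a))}$.

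From here the argument splits. If $\Psi$ is not a permutation it misses some value $a_0$, whence $\sigma_\Psi=0$ there and $\widehat{\chi_g}(a_0,b)=\frac{q}{2}\big((-1)^{u_1}+(-1)^{u_2}+(-1)^{u_3}\big)$ with $u_i:=Tr^n_1(b\Phi_i^{-1}(a_0))$; this is an odd multiple of $q/2$, hence never $\pm q$, so $g$ is not bent. Thus bentness forces condition (1). Assuming (1) and setting $v:=Tr^n_1(b\Psi^{-1}(a))$ and letting $s$ be the number of indices $i$ with $u_i=1$, the bracket above equals $3-2s-(-1)^v$, and running through $s\in\{0,1,2,3\}$, $v\in\{0,1\}$ shows it lies in $\{\pm 2\}$ exactly when $v\equiv u_1+u_2+u_3\pmod 2$ and otherwise lies in $\{0,\pm 4\}$. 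Hence $g$ is bent if and only if $Tr^n_1\big(b\big(\Phi_1^{-1}(a)+\Phi_2^{-1}(a)+\Phi_3^{-1}(a)+\Psi^{-1}(a)\big)\big)=0$ for all $a,b\in\F_q$; letting $b$ vary and using non-degeneracy of the trace form, this amounts to $\Phi_1^{-1}+\Phi_2^{-1}+\Phi_3^{-1}=\Psi^{-1}$, i.e. condition (2). Conversely, when (1) and (2) hold the displayed formula at once gives $\widehat{\chi_g}(a,b)=\pm q=\pm 2^n$ for every $(a,b)$.

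I expect the only real difficulty to be bookkeeping in the last step: making sure the chain of equivalences is tight (every value of the bracket other than $\pm 2$ must be shown to obstruct bentness), and that it is the vanishing of the trace for \emph{all} $b$ that forces the functional identity $\Phi_1^{-1}+\Phi_2^{-1}+\Phi_3^{-1}=\Psi^{-1}$ rather than some weaker statement. The analytic content --- the two uses of the three-bit identity and the evaluation of the additive sum over $x$ --- is routine.
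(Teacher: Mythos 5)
Your proposal is correct: the three-bit identity for $(-1)^{t_1t_2+t_2t_3+t_1t_3}$, the evaluation of the inner character sum over $x$, and the final case analysis forcing both that $\Psi$ be a permutation and that $Tr^n_1\bigl(b(\Psi^{-1}(a)+\Phi_1^{-1}(a)+\Phi_2^{-1}(a)+\Phi_3^{-1}(a))\bigr)=0$ for all $a,b$ (hence condition (2) by nondegeneracy of the trace form) all check out. The paper itself does not prove this theorem but only cites Mesnager's work, and your argument is essentially the standard proof given in that reference, so there is nothing to flag beyond the routine remark that bentness on $\mathbb{F}_q\times\mathbb{F}_q$ may indeed be tested with the componentwise trace pairing, as you implicitly do.
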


The above theorem has been used to construct bent functions starting from monomial permutations \cite{Mesnager2014}, involutions \cite{Mesnager2016_c,CM2018}, or other families of permutations satisfying property ($\mathcal{A}_n$) \cite{Mesnager2016_a}. In this paper we construct bent functions starting from triples of permutations which are monomials (see Theorem \ref{Prop:Fam1}),  binomials and monomials (see Theorem \ref{Prop:Fam2}), monomials and trinomials  (see Theorem \ref{Prop:Fam3}), binomials and quadrinomials (see Theorem \ref{Prop:Fam4}), monomials and quadrinomials (see Theorem \ref{Prop:Fam5}).

A general problem, when dealing with bent functions, is the equivalence issue. For many bent functions so far constructed, the  equivalence question is still open. Also, if an answer is given,  the most common result is a proof that they are Extended Affine  equivalent to a Maiorana-McFarland bent function. The authors in \cite{CM2018} give a sufficient criterion for a triple $(\Phi_1,\Phi_2,\Phi_3)$ satisfying property $(\mathcal{A}_n)$ to yield a Maiorana-McFarland bent function; see \cite[Theorem 5]{CM2018}. For each pair $(i,j)$, $1\leq i<j\leq 3$, let us define 
\begin{equation}\label{Eq:E}
E_{i,j}=\{x \in \mathbb{F}_q \ : \ \Phi_i(x)=\Phi_j(x)\}, \qquad E^{\cup}=\cup_{i\neq j} E_{ij}.
\end{equation}
By \cite[Theorem 5]{CM2018}, if the permutations $\Phi_i$ ($i=1,2,3$) are such that $E^{\cup}=\mathbb{F}_q$, then the construction of Theorem \ref{Th:Costruzione} gives rise to  Maiorana-McFarland bent functions.

We prove that all the families of triples we provide in our paper satisfy $E^{\cup}\neq \mathbb{F}_q$. 
It should be very interesting to determine whether  the sufficient condition in \cite[Theorem 5]{CM2018} is also necessary for such triples.

\section{Bent functions from triples of permutations satisfying property \texorpdfstring{$(\mathcal{A}_m)$}{Lg}}\label{Sec:Main}

In this section we present our main results concerning the construction of  bent functions via triples of permutations which are monomials (see Theorem \ref{Prop:Fam1}),  binomials and monomials (see Theorem \ref{Prop:Fam2}), monomials and trinomials  (see Theorem \ref{Prop:Fam3}), binomials and quadrinomials (see Theorem \ref{Prop:Fam4}), monomials and quadrinomials (see Theorem \ref{Prop:Fam5}).
{We show that such triples satisfy the hypothesis of Theorem \ref{Th:Costruzione}, which can then be applied to obtain bent functions over $\mathbb{F}_{2^{4m}}\times \mathbb{F}_{2^{4m}}$ (Theorems \ref{Prop:Fam1}, \ref{Prop:Fam2}, \ref{Prop:Fam4}, \ref{Prop:Fam5}) or over $\mathbb{F}_{2^{6m}}\times \mathbb{F}_{2^{6m}}$ (Theorem \ref{Prop:Fam3})}.

We start with a construction involving monomials.

\begin{theorem}\label{Prop:Fam1}
Let $m$ be a positive integer and $\lambda\in \mathbb{F}_{2^{4m}}$ be such that $\lambda^{2^{m}+1}=1$. The triple 
\begin{equation}\label{Family1}
(\Phi_1,\Phi_2,\Phi_3)=(x^{2^{2m}},\lambda x^{2^{m}},\lambda x^{2^{3m}})
\end{equation}
satisfies ($\mathcal{A}_{4m}$) and $E^{\cup}\neq \mathbb{F}_{2^{4m}}$. 
\end{theorem}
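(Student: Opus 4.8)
The plan is to verify the two conditions of Definition \ref{Def:Phi} directly, exploiting the fact that $\Phi_1,\Phi_2,\Phi_3$ are all (scalar multiples of) Frobenius maps $x\mapsto x^{2^{jm}}$, which are additive. First I would record the inverses. Since $x\mapsto x^{2^{2m}}$ is an involution on $\mathbb{F}_{2^{4m}}$, we have $\Phi_1^{-1}=\Phi_1$. For $\Phi_2=\lambda x^{2^m}$, solving $\lambda y^{2^m}=x$ gives $y=(\lambda^{-1}x)^{2^{3m}}=\lambda^{-2^{3m}}x^{2^{3m}}$; using $\lambda^{2^m+1}=1$, i.e. $\lambda^{-1}=\lambda^{2^m}$, one simplifies the scalar, and similarly $\Phi_3^{-1}$ is of the form (scalar)$\cdot x^{2^m}$. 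The key algebraic input throughout is the relation $\lambda^{2^m+1}=1$, equivalently $\lambda^{2^m}=\lambda^{-1}$ and $\lambda^{2^{3m}}=(\lambda^{2^m})^{2^{2m}}=\lambda^{-2^{2m}}$; I expect $\lambda^{2^{2m}}$ to reappear, so I would also note $\lambda^{2^{2m}+1}=(\lambda^{2^m+1})^{2^m}\cdot\lambda^{1-2^m}$-type manipulations, or more cleanly that $\lambda\in\mathbb{F}_{2^{2m}}$ is not assumed, so $\lambda^{2^{2m}}$ must be carried as a symbol $\mu$ with $\mu^{2^{2m}}=\lambda$ and $\lambda\mu=\lambda^{2^{2m}+1}$.

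Next I would compute $\Psi=\Phi_1+\Phi_2+\Phi_3=x^{2^{2m}}+\lambda x^{2^m}+\lambda x^{2^{3m}}$ and show it is a permutation of $\mathbb{F}_{2^{4m}}$. This is a linearized polynomial, so it is a permutation if and only if its only root is $0$; equivalently, the associated $4\times4$ matrix over $\mathbb{F}_{2^m}$ (with respect to a normal-type basis, or the Dickson/$q$-associate matrix) is invertible. Concretely, writing $u=x$, $u_1=x^{2^m}$, $u_2=x^{2^{2m}}$, $u_3=x^{2^{3m}}$ with $u_i^{2^m}=u_{i+1\bmod 4}$, the equation $u_2+\lambda u_1+\lambda u_3=0$ together with its three Frobenius twists yields a circulant-like linear system whose determinant I would compute and show is nonzero precisely because $\lambda^{2^m+1}=1$. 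I expect the determinant to factor into something like $1+(\text{norm terms in }\lambda)$, and the constraint $\lambda^{2^m+1}=1$ should make it nonvanishing (one must check no parasitic cancellation forces it to $0$).

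Then I would verify condition (2), $\Psi^{-1}=\Phi_1^{-1}+\Phi_2^{-1}+\Phi_3^{-1}$. The cleanest route: let $L=\Phi_1^{-1}+\Phi_2^{-1}+\Phi_3^{-1}$ (again a linearized polynomial with explicit scalar coefficients from the first step) and show $\Psi\circ L=\mathrm{id}$, i.e. $\Psi(L(x))=x$ for all $x$. Since both are linearized, $\Psi\circ L$ is linearized, so it suffices to expand the composition, collect the coefficients of $x,x^{2^m},x^{2^{2m}},x^{2^{3m}}$, and check that the coefficient of $x$ is $1$ and the other three vanish — each such vanishing being an identity in $\lambda$ that reduces, via $\lambda^{2^m+1}=1$, to $0=0$. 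This is bookkeeping but mechanical. Finally, for $E^{\cup}\neq\mathbb{F}_{2^{4m}}$: the sets $E_{ij}$ are solution sets of linearized equations $\Phi_i(x)=\Phi_j(x)$, hence $\mathbb{F}_{2^m}$-subspaces of $\mathbb{F}_{2^{4m}}$ of dimension $<4m$ (each is a proper subspace because the corresponding difference $\Phi_i-\Phi_j$ is a nonzero linearized polynomial — one checks it does not vanish identically, again using $\lambda\neq0$ and $\lambda^{2^m+1}=1$), and a union of finitely many proper $\mathbb{F}_2$-subspaces cannot cover $\mathbb{F}_{2^{4m}}$; a crude cardinality bound $|E^\cup|\le 3\cdot 2^{3m}<2^{4m}$ already suffices.

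The main obstacle I anticipate is purely computational rather than conceptual: keeping the scalar coefficients straight through the compositions, since $\lambda\notin\mathbb{F}_{2^m}$ in general means every Frobenius twist changes $\lambda$ to $\lambda^{2^{jm}}$, and one must repeatedly apply $\lambda^{2^m+1}=1$ (hence $\lambda^{2^{jm}}=\lambda^{(-1)^j}$ for the relevant exponents) to force the identities $\Psi\circ L=\mathrm{id}$. I would organize this by fixing notation $\lambda_j:=\lambda^{2^{jm}}$ with $\lambda_0\lambda_1=\lambda_2\lambda_3=1$ at the outset, so that the verification of (2) becomes a short symmetric computation rather than a brute-force expansion.
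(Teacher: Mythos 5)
Your overall route is the paper's route: you compute the inverses (and indeed, once you simplify the scalars using $\lambda^{2^m}=\lambda^{-1}$, you find $\Phi_1^{-1}=\Phi_1$, $\Phi_2^{-1}=\lambda x^{2^{3m}}=\Phi_3$ and $\Phi_3^{-1}=\Phi_2$, so $L=\Phi_1^{-1}+\Phi_2^{-1}+\Phi_3^{-1}=\Psi$ and your check $\Psi\circ L=\mathrm{id}$ is exactly the paper's verification that $\Psi$ is an involution). Note also that this composition check makes your separate Dickson-determinant step superfluous: on a finite field, $\Psi\circ L=\mathrm{id}$ already forces $\Psi$ to be a permutation. (The determinant you ``expect'' to be nonzero does turn out to be nonzero, but in your write-up it is only an expectation, not a verification; fold it into the composition check instead.)

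There is, however, a genuine gap in your treatment of $E^{\cup}$. The crude bound $|E^{\cup}|\leq 3\cdot 2^{3m}$ does \emph{not} suffice: $3\cdot 2^{3m}<2^{4m}$ requires $2^m>3$, so it fails precisely at $m=1$ (where $3\cdot 8=24>16$), and the theorem claims all positive $m$. The covering principle you invoke is also false as stated: a union of finitely many proper $\mathbb{F}_2$-subspaces \emph{can} cover the whole space (three suitable hyperplanes of $\mathbb{F}_2^{4}$ already do), and since you have exactly three sets $E_{12},E_{13},E_{23}$, viewing them as proper $\mathbb{F}_{2^m}$-subspaces only rescues the argument when $2^m+1>3$, i.e.\ again only for $m\geq 2$. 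The repair is the sharper bound the paper uses: $E_{23}$ is the set of roots of $\lambda\bigl(x^{2^m}+x^{2^{3m}}\bigr)=\lambda\bigl(x+x^{2^{2m}}\bigr)^{2^m}$, i.e.\ $E_{23}=\mathbb{F}_{2^{2m}}$ of size $2^{2m}$, while $E_{12}$ and $E_{13}$ each reduce (by taking suitable Frobenius powers of the defining equation, using $\lambda^{2^{2m}}=\lambda$) to the single equation $\lambda x^{2^m}=x$, which has at most $2^m$ solutions. Hence $|E^{\cup}|\leq 2^{2m}+2^{m}<2^{4m}$ for every $m\geq 1$, closing the $m=1$ case that your crude estimate leaves open.
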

\begin{proof}
Clearly $\Phi_i$, $i=1,2,3$, is a permutation of $\mathbb{F}_{2^{4m}}$. It is easily seen that $(\Phi_1^{-1},\Phi_2^{-1},\Phi_3^{-1})=(\Phi_1,\Phi_3,\Phi_2)$ and therefore $\Psi=\Phi_1+\Phi_2+\Phi_3=\Phi_1^{-1}+\Phi_2^{-1}+\Phi_3^{-1}$. Also, $\Psi$ is a permutation {of $\mathbb{F}_{2^{4m}}$} since it is linearized and $\Psi(x)=0$ {with $x\in\mathbb{F}_{2^{4m}}$} if and only if $x=0$. In fact, we have  
$$\lambda x^{2^{m}}+x^{2^{2m}}+\lambda x^{2^{3m}}=0 \iff \left(\frac{1}{\lambda}x +x^{2^{m}}+ \frac{1}{\lambda} x^{2^{2m}}\right)^{2^m}=0\iff x +\lambda x^{2^{m}}+ x^{2^{2m}}=0.$$
Consider now $x \in \mathbb{F}_{2^{4m}}$ such that $x +\lambda x^{2^{m}}+ x^{2^{2m}}=0$. Then    
$$x=x^{2^{4m}}=\left(\left(x +\lambda x^{2^{m}}\right)^{2^m}\right)^{2^m}=\left(x^{2^m} +\frac{1}{\lambda} x^{2^{2m}}\right)^{2^m}=\left(\frac{1}{\lambda}x\right)^{2^m}=\lambda x^{2^m},$$
from which we get $x^{2^{2m}}=0$, that is $x=0$. 

Also, {for any $x\in\mathbb{F}_{2^{4m}}$,}
\begin{eqnarray*}
\Psi(\Psi(x))&=&\lambda \left(\lambda x^{2^{m}}+x^{2^{2m}}+\lambda x^{2^{3m}}\right)^{2^{m}}+ \left(\lambda x^{2^{m}}+x^{2^{2m}}+\lambda x^{2^{3m}}\right)^{2^{2m}}+\lambda \left(\lambda x^{2^{m}}+x^{2^{2m}}+\lambda x^{2^{3m}}\right)^{2^{3m}}\\
&=& x^{2^{2m}}+\lambda x^{2^{3m}}+ x+ \lambda x^{2^{3m}}+x+\lambda x^{2^{m}} + x+\lambda x^{2^{m}}+ x^{2^{2m}}=x,
\end{eqnarray*}
that is $\Psi=\Phi_1+\Phi_2+\Phi_3=\Phi_1^{-1}+\Phi_2^{-1}+\Phi_3^{-1}=\Psi^{-1}$ and property  ($\mathcal{A}_{4m}$) holds. Finally, 
\begin{eqnarray*}
E^{\cup}&=& \{x \in \mathbb{F}_{2^{4m}} \ : \ x^{2^{2m}}=\lambda x^{2^m} \textrm{ or } x^{2^{2m}}=\lambda x^{2^{3m}} \textrm{ or } \lambda x^{2^{m}}=\lambda x^{2^{3m}} \}\\
&=& \{x \in \mathbb{F}_{2^{4m}} \ : \ \lambda x^{2^{m}}= x  \textrm{ or }  x= x^{2^{2m}} \}
\end{eqnarray*}
and $\left| E^{\cup}\right| \leq 2^{2m}+2^{m}< 2^{4m}=|\mathbb{F}_{2^{4m}}|$.

\end{proof}



When using binomials, the following proposition is a key result. 
\begin{proposition}\label{Prop:binomials}
Suppose $\Phi(x)=\alpha x^{2^i}+\beta x^{2^j}$ is a permutation of $\mathbb{F}_{2^n}$ with $i<j<n$. If $\Phi^{-1}(x)$ is a linearized binomial then $n=2m$ and  $j=m+i$  for a positive integer $m$, and 
$\Phi^{-1}(x)=\gamma x^{2^{m-i}}+\delta x^{2^{2m-i}},$
where 
\begin{equation}\label{Eq:alpha,beta}
\gamma=\left(\frac{\beta^{2^{m}}}{\alpha^{2^{m}+1}+\beta^{2^{m}+1}}\right)^{2^{m-i}},\quad \delta=\left(\frac{\alpha}{\alpha^{2^{m}+1}+\beta^{2^{m}+1}}\right)^{2^{m-i}}.
\end{equation}
Also, if $n=2m$, $i<m$, $j=m+i$ and $\alpha,\beta\in\mathbb{F}_{2^n}$ satisfy $\alpha^{2^m+1}\ne\beta^{2^m+1}$, then $\Phi(x)$ is a permutation of $\mathbb{F}_{2^n}$ with $\Phi(x)^{-1}=\gamma x^{2^{m-i}}+\delta x^{2^{2m-i}}$, where $\gamma$ and $\delta$ are given in Equation \eqref{Eq:alpha,beta}.
\end{proposition}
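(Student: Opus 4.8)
The plan is to reduce both implications to a single composition identity together with the solution of a $2\times 2$ linear system over $\mathbb{F}_{2^n}$ twisted by the Frobenius. I will use repeatedly that two linearized polynomials $\sum_{k=0}^{n-1}a_kx^{2^k}$ and $\sum_{k=0}^{n-1}b_kx^{2^k}$ induce the same map on $\mathbb{F}_{2^n}$ exactly when $a_k=b_k$ for all $k$; thus, after expanding a composition of linearized polynomials, one may reduce every exponent modulo $n$ and then compare coefficients.

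For the ``only if'' part, assume $\Phi$ is a permutation and $\Phi^{-1}(x)=\gamma x^{2^a}+\delta x^{2^b}$ is a genuine binomial, with $\gamma\delta\ne0$ and $0\le a<b\le n-1$ (since $\alpha\beta\ne0$ the map $\Phi$, hence $\Phi^{-1}$, is not a monomial). First I would expand
$$\Phi(\Phi^{-1}(x))=\alpha\gamma^{2^i}x^{2^{a+i}}+\alpha\delta^{2^i}x^{2^{b+i}}+\beta\gamma^{2^j}x^{2^{a+j}}+\beta\delta^{2^j}x^{2^{b+j}},$$
reduce the four exponents $a+i,b+i,a+j,b+j$ modulo $n$, and observe that since $0<j-i<n$ and $0<b-a<n$ the only coincidences possible among them are $a+i\equiv b+j$ and $b+i\equiv a+j$. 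As all four coefficients displayed above are nonzero while $\Phi(\Phi^{-1}(x))=x$ has a single term, a short case check (fewer than two coincidences would force at least two of those coefficients to vanish) shows both coincidences must hold. From $b+i\equiv a+j$ one gets $b-a=j-i=:d$, and subtracting the two relations gives $2d\equiv0\pmod n$, so $n=2d$ because $0<d<n$; hence $n=2m$ with $m=d$, $j=m+i$, and $\{a,b\}\equiv\{m-i,2m-i\}\pmod n$. Writing accordingly $\Phi^{-1}(x)=\gamma x^{2^{m-i}}+\delta x^{2^{2m-i}}$, the surviving exponents of $\Phi(\Phi^{-1}(x))$ are $m$ and $0$, and matching the coefficient of $x^{2^m}$ with $0$ and that of $x$ with $1$ yields
$$\alpha\gamma^{2^i}+\beta\delta^{2^{m+i}}=0,\qquad \alpha\delta^{2^i}+\beta\gamma^{2^{m+i}}=1.$$
Setting $u=\gamma^{2^i}$ and $v=\delta^{2^i}$, so that $\gamma^{2^{m+i}}=u^{2^m}$ and $\delta^{2^{m+i}}=v^{2^m}$, and using $z^{2^{2m}}=z$ on $\mathbb{F}_{2^n}$, this becomes $\alpha u+\beta v^{2^m}=0$, $\alpha v+\beta u^{2^m}=1$, which I would solve to get $v=\alpha^{2^m}/\Delta$ and $u=\beta/\Delta$ with $\Delta=\alpha^{2^m+1}+\beta^{2^m+1}$ (necessarily $\Delta\ne0$, otherwise the second equation reads $0=1$). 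Inverting the Frobenius then recovers precisely the $\gamma,\delta$ of \eqref{Eq:alpha,beta}.

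For the ``if'' part, with $n=2m$, $i<m$, $j=m+i$ and $\Delta:=\alpha^{2^m+1}+\beta^{2^m+1}\ne0$, I would simply define $\Psi_0(x)=\gamma x^{2^{m-i}}+\delta x^{2^{2m-i}}$ from \eqref{Eq:alpha,beta} (legitimate since $\Delta\ne0$) and run the expansion above forwards. Using that $\Delta\in\mathbb{F}_{2^m}$ (so $\Delta^{2^m}=\Delta$) and $z^{2^{2m}}=z$, one checks $\gamma^{2^i}=\beta/\Delta$, $\delta^{2^i}=\alpha^{2^m}/\Delta$, $\gamma^{2^{m+i}}=\beta^{2^m}/\Delta$, $\delta^{2^{m+i}}=\alpha/\Delta$, whence $\alpha\gamma^{2^i}+\beta\delta^{2^{m+i}}=0$ and $\alpha\delta^{2^i}+\beta\gamma^{2^{m+i}}=1$, so that $\Phi(\Psi_0(x))=x$ identically. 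Since $\Phi$ and $\Psi_0$ are self-maps of the finite set $\mathbb{F}_{2^n}$ with $\Phi\circ\Psi_0=\mathrm{id}$, both are bijections and $\Psi_0=\Phi^{-1}$.

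The one genuinely delicate step is the exponent bookkeeping in the ``only if'' part: keeping the four residues $a+i,b+i,a+j,b+j$ straight modulo $n$ and arguing carefully that nonvanishing of all four coefficients forces exactly the two coincidences $a+i\equiv b+j$ and $b+i\equiv a+j$. Everything after that — the descent $2d\equiv0$, the solution of the twisted $2\times2$ system, and the verification in the ``if'' direction — is routine.
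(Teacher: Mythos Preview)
Your proof is correct and follows essentially the same route as the paper's: expand $\Phi\circ\Phi^{-1}$, match exponents modulo $n$ to force $n=2m$, $j=m+i$, and the exponents $m-i$, $2m-i$ for $\Phi^{-1}$, then solve the resulting coefficient equations and verify the converse by direct composition. The only minor differences are that you solve the Frobenius-twisted $2\times2$ system directly from $\Phi\circ\Phi^{-1}=\mathrm{id}$ alone (the paper also records the equations coming from $\Phi^{-1}\circ\Phi=\mathrm{id}$ before appealing to uniqueness), and your set-equality $\{a,b\}\equiv\{m-i,2m-i\}\pmod n$ handles the boundary case $i=0$ a bit more transparently.
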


\begin{proof}
Let $\Phi^{-1}(x)=\gamma x^{2^k}+\delta x^{2^\ell}$ with $\gamma,\delta\in\mathbb{F}_{2^n}$ and $k<\ell<n$. Then
$$
\Phi\circ\Phi^{-1}(x)=\alpha\gamma^{2^i}x^{2^{k+i}}+\alpha\delta^{2^i}x^{2^{\ell+i}}+\beta\gamma^{2^j}x^{2^{k+j}}+\beta\delta^{2^j}x^{2^{j+\ell}}\equiv x \pmod{x^{2^n}-x}
$$
only if $k+i\equiv \ell+j\pmod{n}$ and $\ell+i\equiv k+j\pmod n$. This implies that $n$ is even, say $n=2m$; also, $j=m+i$, $\ell=k+m$. Then one between $k+i$ and $k+j$ vanishes modulo $n$.

If $k+i\equiv0\pmod n$, then either $k$ or $i$ is at least $m$, so that either $\ell$ or $j$ is at least $n$, a contradiction to the assumptions. Hence, $k+j\equiv0\pmod{2m}$, that is $k+i\equiv m \pmod{2m}$. This forces $k=m-i$ and $\ell=2m-i$. Then
$$
\Phi\circ\Phi^{-1}(x)=\left(\alpha\gamma^{2^i}+\beta\delta^{2^{m+i}}\right)x^m + \left(\alpha\delta^{2^i}+\beta\gamma^{2^{m+i}}\right)x
$$
equals $x\pmod{x^{2^n}-x}$ if and only if \begin{equation}\label{Prima}\alpha\gamma^{2^i}+\beta\delta^{2^{m+i}}=0, \quad\alpha\delta^{2^i}+\beta\gamma^{2^{m+i}}=1.\end{equation}
From $\Phi^{-1}\circ\Phi(x)\equiv x\pmod{x^{2^n}-x}$ it follows also \begin{equation}\label{Seconda}\gamma\alpha^{2^{m-i}}+\delta\beta^{2^{2m-i}}=0,\quad \gamma\beta^{2^{m-i}}+\delta\alpha^{2^{2m-i}}=1. \end{equation}
By the uniqueness of $\Phi^{-1}(x)$, we have that $\alpha^{2^m+1}\ne\beta^{2^m+1}$ and $\gamma,\delta$ are as in Equation \eqref{Eq:alpha,beta} and satisfy both Equations \eqref{Prima} and \eqref{Seconda}.

If $n=2m$, $i<m$, $j=m+i$, $\alpha,\beta\in\mathbb{F}_{2^n}$ satisfy $\alpha^{2^m+1}\ne\beta^{2^m+1}$, and $\gamma,\delta$ as in Equation \eqref{Eq:alpha,beta}, then it is readily seen that $\Phi(x)$ is a permutation and $\Phi^{-1}(x)=\gamma x^{2^{m-i}}+\delta x^{2^{2m-i}}$.
\end{proof}


As an application, we produce triples of permutation polynomials satisfying property ($\mathcal{A}_n$) using binomials. 
\begin{theorem}\label{Prop:Fam2}
Let $m$ be a positive integer,  $\alpha\in \mathbb{F}_{2^{m}}$. 
The triple 
\begin{equation}\label{Family2}
(\Phi_1,\Phi_2,\Phi_3)=(x^{2^{2m}},\alpha x^{2^m}+(\alpha+1) x^{2^{3m}},(\alpha+1) x^{2^m}+\alpha x^{2^{3m}})
\end{equation} 
satisfies ($\mathcal{A}_{4m}$) and $E^{\cup}\neq \mathbb{F}_{2^{4m}}$.
\end{theorem}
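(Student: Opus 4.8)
The plan is to use Proposition \ref{Prop:binomials} to pin down the inverses of the two binomials in \eqref{Family2}, thereby reducing property $(\mathcal{A}_{4m})$ to the statement that the common sum $\Psi=\Phi_1+\Phi_2+\Phi_3$ is an involutory permutation, and then to show by a short direct computation that all three sets $E_{i,j}$ lie inside the proper subfield $\mathbb{F}_{2^{2m}}$.

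First, $\Phi_1(x)=x^{2^{2m}}$ is a permutation of $\mathbb{F}_{2^{4m}}$ with $\Phi_1^{-1}=\Phi_1$. For $\Phi_2(x)=\alpha x^{2^m}+(\alpha+1)x^{2^{3m}}$ I would apply Proposition \ref{Prop:binomials} with $n=4m$, exponents $i=m$ and $j=3m$, and the parameter called $m$ in that statement taken equal to $2m$: the conditions $i<2m$ and $j=2m+i$ hold, and the needed non-degeneracy $\alpha^{2^{2m}+1}\ne(\alpha+1)^{2^{2m}+1}$ is precisely where the hypothesis $\alpha\in\mathbb{F}_{2^m}$ enters, since then $\alpha^{2^{2m}}=\alpha$ and $(\alpha+1)^{2^{2m}}=\alpha+1$, so the two sides are $\alpha^2$ and $\alpha^2+1$. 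Substituting into \eqref{Eq:alpha,beta} and using $\alpha^{2^m}=\alpha$ once more, the formulas for $\gamma,\delta$ collapse to $\gamma=\alpha+1$ and $\delta=\alpha$, i.e. $\Phi_2^{-1}=\Phi_3$; the symmetry $\alpha\leftrightarrow\alpha+1$ then gives $\Phi_3^{-1}=\Phi_2$. (Alternatively one can verify $\Phi_2\circ\Phi_3=\mathrm{id}$ by a direct expansion, using $c^{2^{km}}=c$ for $c\in\mathbb{F}_{2^m}$.) Hence $(\Phi_1^{-1},\Phi_2^{-1},\Phi_3^{-1})=(\Phi_1,\Phi_3,\Phi_2)$, so $\Phi_1^{-1}+\Phi_2^{-1}+\Phi_3^{-1}=\Phi_1+\Phi_2+\Phi_3=\Psi$, and condition (2) of $(\mathcal{A}_{4m})$ reduces to $\Psi^{-1}=\Psi$.

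Next I would compute $\Psi=x^{2^m}+x^{2^{2m}}+x^{2^{3m}}$, where the $\alpha$-terms cancel; this is exactly the polynomial $\Psi$ appearing in Theorem \ref{Prop:Fam1} for $\lambda=1$ (an admissible choice, as $1^{2^m+1}=1$). Thus the computations already carried out in the proof of Theorem \ref{Prop:Fam1} show that $\Psi$ is a permutation of $\mathbb{F}_{2^{4m}}$ — it is linearized and $\Psi(x)=0$ forces, after raising to $2^m$ and back-substituting, $x=x^{2^m}$, hence $x\in\mathbb{F}_{2^m}$ and then $x=0$ — and that $\Psi\circ\Psi=\mathrm{id}$, obtained by expanding $\Psi(\Psi(x))$ and collecting the eight Frobenius monomials. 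Together with the previous paragraph this establishes both conditions of $(\mathcal{A}_{4m})$.

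It remains to bound $E^{\cup}$. From $\Phi_2(x)=\Phi_3(x)$ one immediately gets $x^{2^m}=x^{2^{3m}}$, i.e. $x=x^{2^{2m}}$, so $E_{2,3}=\mathbb{F}_{2^{2m}}$. For $E_{1,2}$, raising the defining relation $x^{2^{2m}}=\alpha x^{2^m}+(\alpha+1)x^{2^{3m}}$ to the power $2^{2m}$ and using $\alpha,\alpha+1\in\mathbb{F}_{2^m}$ together with $x^{2^{4m}}=x$ yields $x=(\alpha+1)x^{2^m}+\alpha x^{2^{3m}}=\Phi_3(x)$; applying $\Phi_2=\Phi_3^{-1}$ gives $\Phi_2(x)=x$, whence $\Phi_1(x)=\Phi_2(x)=x$ forces $x^{2^{2m}}=x$, so $E_{1,2}\subseteq\mathbb{F}_{2^{2m}}$ (in fact $E_{1,2}=\mathbb{F}_{2^m}$); the argument for $E_{1,3}$ is symmetric. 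Therefore $E^{\cup}\subseteq\mathbb{F}_{2^{2m}}\subsetneq\mathbb{F}_{2^{4m}}$. I expect the only genuine friction in the whole argument to be bookkeeping: aligning the exponents of the binomials with the hypotheses of Proposition \ref{Prop:binomials} and checking the non-degeneracy condition, and — for the $E^{\cup}$ part — noticing that the crude degree bound $|E_{1,2}|\le 2^{3m}$ is too weak when $m=1$, so that the Frobenius twist placing $E_{1,2}$ inside $\mathbb{F}_{2^{2m}}$ is actually needed.
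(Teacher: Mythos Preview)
Your proof is correct and follows essentially the same approach as the paper: invoke Proposition \ref{Prop:binomials} to get $\Phi_2^{-1}=\Phi_3$, observe that $\Psi=x^{2^m}+x^{2^{2m}}+x^{2^{3m}}$ is the involution already handled (for $\lambda=1$) in Theorem \ref{Prop:Fam1}, and then bound $E^{\cup}$. The paper is terser, simply writing ``arguing as in Proposition \ref{Prop:Fam1} it is easily seen that $|E^{\cup}|\leq 2^{2m}+2^m$''; your explicit Frobenius-twist argument showing $E_{1,2},E_{1,3}\subseteq\mathbb{F}_{2^{2m}}$ (in fact $=\mathbb{F}_{2^m}$) supplies exactly the detail the paper omits and, as you note, is what makes the bound work uniformly including $m=1$.
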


\begin{proof}
It is easily seen that $\Phi_2$ and $\Phi_3$ are bijections and $\Phi_2^{-1}=\Phi_3$, by Proposition \ref{Prop:binomials}. Also, 
$$\Psi(x)=\Phi_1(x)+\Phi_2(x)+\Phi_3(x)=x^{2^m}+x^{2^{2m}}+x^{2^{3m}}$$
is an involution and
$$\Phi_1^{-1}(x)+\Phi_2^{-1}(x)+\Phi_3^{-1}(x)=\Phi_1(x)+\Phi_2(x)+\Phi_3(x)=\Psi(x)=\Psi^{-1}(x).$$
Therefore, $(\Phi_1,\Phi_2,\Phi_3)$ satisfies ($\mathcal{A}_{4m}$). Arguing as in Proposition \ref{Prop:Fam1} it is easily seen that $\left| E^{\cup}\right| \leq 2^{2m}+2^{m} < 2^{4m}=|\mathbb{F}_{2^{4m}}|$ and the claim follows.
\end{proof}


\begin{remark}
It is worth noting that binomials have been already used in \cite[Theorem 7]{CM2018}, where the authors use the classification given in \cite[Proposition 5]{CMS2015} of the linear binomial involutions of $\mathbb{F}_{2^m}$. In Theorem \ref{Prop:Fam2} we do not use involutions. 
\end{remark}

Trinomials are investigated in the next proposition. 
\begin{proposition}\label{Prop:trinomials}
Let $m$ be a positive integer. Two trinomials \begin{eqnarray*}
\Phi(x)&=&\alpha x^{2^m}+\beta x^{2^{3m}}+\gamma x^{2^{5m}}\in \mathbb{F}_{2^{6m}}[x],\\
\Psi(x)&=&\overline{\alpha} x^{2^m}+\overline{\beta} x^{2^{3m}}+\overline{\gamma} x^{2^{5m}}\in \mathbb{F}_{2^{6m}}[x]
\end{eqnarray*}
satisfy $\Phi \circ \Psi(x)=\Psi \circ \Phi(x)=x$ for each $x\in \mathbb{F}_{2^{6m}}$ if and only if
\begin{equation}\label{EquazioniTrinomi}
    \left\{
    \begin{array}{l}
    \overline{\alpha}\gamma^{2^m}+\overline{\beta}\beta^{2^{3m}}+\overline{\gamma}\alpha^{2^{5m}}=1\\
    \overline{\alpha}\alpha^{2^m}+\overline{\beta}\gamma^{2^{3m}}+\overline{\gamma}\beta^{2^{5m}}=0\\
    \overline{\alpha}\beta^{2^m}+\overline{\beta}\alpha^{2^{3m}}+\overline{\gamma}\gamma^{2^{5m}}=0
    \end{array}
    \right.
\end{equation}
and 
\begin{equation}\label{DeterminanteTrinomi}
\det\left( \begin{array}{ccc}
\gamma^{2^m}&\beta^{2^{3m}}&\alpha^{2^{5m}}\\
    \alpha^{2^m}&\gamma^{2^{3m}}&\beta^{2^{5m}}\\
    \beta^{2^m}&\alpha^{2^{3m}}&\gamma^{2^{5m}}\\
\end{array}\right)\neq 0.
\end{equation}
\end{proposition}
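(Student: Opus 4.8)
The plan is to compute the composition $\Phi\circ\Psi(x)$ explicitly as a linearized polynomial in $x$ and read off the conditions under which it reduces to $x$ modulo $x^{2^{6m}}-x$. Since $\Phi$ and $\Psi$ are $\mathbb{F}_{2^m}$-linearized with exponents among $2^m,2^{3m},2^{5m}$, and since $6m$ appears in the exponents, all arithmetic of exponents takes place modulo $6m$, so the only exponents that can occur in $\Phi\circ\Psi$ are (after reduction) $2^{2m}, 2^{4m}, 2^{6m}\equiv 2^0$ from $\Psi$ composed into the $2^m$-part, etc. First I would expand
\[
\Phi\circ\Psi(x)=\alpha\,\Psi(x)^{2^m}+\beta\,\Psi(x)^{2^{3m}}+\gamma\,\Psi(x)^{2^{5m}},
\]
substitute $\Psi(x)=\overline\alpha x^{2^m}+\overline\beta x^{2^{3m}}+\overline\gamma x^{2^{5m}}$, use the Frobenius identity $(u+v)^{2^k}=u^{2^k}+v^{2^k}$, and reduce every exponent $2^{km}$ modulo the relation $x^{2^{6m}}=x$. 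Collecting the coefficients of $x^{2^m}$, $x^{2^{3m}}$, $x^{2^{5m}}$ and of $x$, $x^{2^{2m}}$, $x^{2^{4m}}$ — here one checks that only the latter three orbits survive because of how the exponent shifts by $m,3m,5m$ interact modulo $6m$ — one obtains that $\Phi\circ\Psi(x)\equiv x$ is equivalent to three linear equations: the coefficient of $x$ equals $1$ and the coefficients of $x^{2^{2m}}$ and $x^{2^{4m}}$ vanish. A direct bookkeeping of which term lands where yields precisely the system \eqref{EquazioniTrinomi}.

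Next I would observe that the system \eqref{EquazioniTrinomi}, viewed as a linear system in the unknowns $\overline\alpha,\overline\beta,\overline\gamma$, has coefficient matrix exactly the $3\times3$ matrix in \eqref{DeterminanteTrinomi} (up to the ordering of rows and columns dictated by how the three equations were labelled), and right-hand side $(1,0,0)^{\mathsf T}$. Hence, if a solution $(\overline\alpha,\overline\beta,\overline\gamma)$ exists then the determinant \eqref{DeterminanteTrinomi} is nonzero (otherwise the system, if consistent, would have a whole line of solutions, contradicting that a linearized polynomial has at most one linearized inverse among trinomials of the prescribed shape — alternatively, one argues that a nonzero determinant is forced because $\Psi$ must then be a bijection with the same kernel analysis as below). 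Conversely, if \eqref{DeterminanteTrinomi} holds, Cramer's rule produces the unique $(\overline\alpha,\overline\beta,\overline\gamma)$ satisfying \eqref{EquazioniTrinomi}, so $\Phi\circ\Psi(x)=x$ for all $x$; one still has to check that this same $\Psi$ also satisfies $\Psi\circ\Phi(x)=x$. For that I would run the symmetric computation: expanding $\Psi\circ\Phi$ gives the analogous system with the roles of $(\alpha,\beta,\gamma)$ and $(\overline\alpha,\overline\beta,\overline\gamma)$ swapped and the exponents relabelled; since $\Phi$ restricted to $\mathbb{F}_{2^{6m}}$ is a finite map and we have already produced a left inverse $\Psi$ (because $\Phi\circ\Psi=\mathrm{id}$ shows $\Phi$ is surjective, hence bijective on the finite set $\mathbb{F}_{2^{6m}}$), a left inverse of a bijection is automatically a two-sided inverse, so $\Psi\circ\Phi=\mathrm{id}$ follows for free.

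The main obstacle I expect is purely organizational: correctly tracking the nine exponent reductions modulo $6m$ in the expansion of $\Phi\circ\Psi$ and verifying that the surviving monomials are exactly $x$, $x^{2^{2m}}$, $x^{2^{4m}}$ with the coefficients matching \eqref{EquazioniTrinomi} in the order claimed. Concretely, the term $\alpha\,(\overline\gamma x^{2^{5m}})^{2^m}=\alpha\overline\gamma^{2^m}x^{2^{6m}}=\alpha\overline\gamma^{2^m}x$ — note the exponent wrapping — and one must be careful that the coefficients $\overline\alpha,\overline\beta,\overline\gamma$ get raised to powers of $2$ coming from the outer Frobenius, so in the equations it is $\overline\alpha\gamma^{2^m}$ etc. that appear, not $\overline\gamma^{2^m}\alpha$; since all of $\alpha,\beta,\gamma,\overline\alpha,\overline\beta,\overline\gamma$ lie in $\mathbb{F}_{2^{6m}}$ this is just a matter of which factor carries the Frobenius exponent. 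Once the single composition $\Phi\circ\Psi$ is expanded correctly, the equivalence with \eqref{EquazioniTrinomi} and \eqref{DeterminanteTrinomi} is immediate linear algebra over $\mathbb{F}_{2^{6m}}$, and the second composition is handled by the bijectivity remark above, so no further computation is needed.
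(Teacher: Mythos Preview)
Your approach is essentially the paper's: expand one composition as a linearized polynomial, read off the three coefficient conditions, identify the $3\times 3$ coefficient matrix, and invoke uniqueness of the inverse (equivalently, finite-set bijectivity) to deduce the other composition. The paper does exactly this, expanding $\Psi\circ\Phi$ and then remarking that the converse direction gives $\Phi\circ\Psi=x$.

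There is one concrete slip to fix. The system \eqref{EquazioniTrinomi} as stated comes from expanding $\Psi\circ\Phi$, not $\Phi\circ\Psi$. Your own sample computation shows this: in $\Phi\circ\Psi$ the outer Frobenius acts on the \emph{barred} coefficients, so the constant term you obtain is $\alpha\overline\gamma^{2^m}+\beta\overline\beta^{2^{3m}}+\gamma\overline\alpha^{2^{5m}}$, which is not literally the first line of \eqref{EquazioniTrinomi} (there the Frobenius sits on $\alpha,\beta,\gamma$ and the barred coefficients are unraised). Your sentence ``so in the equations it is $\overline\alpha\gamma^{2^m}$ etc.\ that appear, not $\overline\gamma^{2^m}\alpha$'' contradicts the term $\alpha\overline\gamma^{2^m}$ you just produced. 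Simply swap the order---expand $\Psi\circ\Phi$ first---and everything you wrote (including the Cramer/bijectivity argument for the reverse composition) goes through verbatim and matches \eqref{EquazioniTrinomi} on the nose.
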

\proof
By direct checking $\Psi\circ \Phi(x)$ equals
$$\left(\overline{\alpha}\gamma^{2^m}+\overline{\beta}\beta^{2^{3m}}+\overline{\gamma}\alpha^{2^{5m}}\right)x+
    \left(\overline{\alpha}\alpha^{2^m}+\overline{\beta}\gamma^{2^{3m}}+\overline{\gamma}\beta^{2^{5m}}\right) x^{2^{2m}}+
    \left(\overline{\alpha}\beta^{2^m}+\overline{\beta}\alpha^{2^{3m}}+\overline{\gamma}\gamma^{2^{5m}}\right) x^{2^{4m}}.$$
So, if $\Psi=\Phi^{-1}$, then $\Psi\circ \Phi(x)=x$ for all $x \in \mathbb{F}_{2^{6m}}$,  and  \eqref{EquazioniTrinomi} holds. Thus,  \eqref{EquazioniTrinomi} has a unique solution $(\overline{\alpha},\overline{\beta},\overline{\gamma})$ and \eqref{DeterminanteTrinomi} holds. On the other hand, if  $(\overline{\alpha},\overline{\beta},\overline{\gamma})$ is a solution of \eqref{EquazioniTrinomi} and \eqref{DeterminanteTrinomi}, then  $\Phi\circ \Psi(x)=x$ for each $x\in \mathbb{F}_{2^{6m}}$ and $\Psi=\Phi^{-1}$.
\endproof

Particular choices of permutation trinomials give rise to the following triples of functions satisfying property ($\mathcal{A}_n$). 
\begin{theorem}\label{Prop:Fam3}
Let $m$ be a positive, $\alpha,\beta \in \mathbb{F}_{2^{2m}}$ be such that either 
\begin{enumerate}[(i)]
    \item \label{I} $\alpha=\beta$, $\alpha^{2^m}+\alpha^{2^m-1}+1=0$, or
    \item\label{II} $\beta=\alpha+1$, $\alpha^{2^m+1}=1$, $\alpha\neq 1$.
\end{enumerate}
The triple 
\begin{equation}\label{Family3}
(\Phi_1,\Phi_2,\Phi_3)=(x^{2^{m}},\alpha x^{2^m}+\beta x^{2^{3m}}+(\alpha+1) x^{2^{5m}},(\alpha+1) x^{2^m}+\beta x^{2^{3m}}+\alpha x^{2^{5m}})
\end{equation} 
satisfies ($\mathcal{A}_{6m}$) and $E^{\cup}\neq \mathbb{F}_{2^{6m}}$.
\end{theorem}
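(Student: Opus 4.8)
The plan is to follow the scheme of the proof of Theorem~\ref{Prop:Fam2}, using Proposition~\ref{Prop:trinomials} in place of Proposition~\ref{Prop:binomials}. The key simplification is that $\alpha,\beta\in\mathbb{F}_{2^{2m}}$, so for every coefficient $a$ involved one has $a^{2^{2m}}=a^{2^{4m}}=a$ and $a^{2^{3m}}=a^{2^{5m}}=a^{2^m}$; this collapses all the high Frobenius powers occurring in \eqref{EquazioniTrinomi}--\eqref{DeterminanteTrinomi} onto $a^{2^m}$. The only substantial point is that $\Phi_2$ is a permutation with $\Phi_2^{-1}=\Phi_3$ (hence also $\Phi_3^{-1}=\Phi_2$). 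Granting this: $\Phi_1(x)=x^{2^m}$ is a permutation with $\Phi_1^{-1}(x)=x^{2^{5m}}$, and $\Phi_2(x)+\Phi_3(x)=x^{2^m}+x^{2^{5m}}$, so $\Psi:=\Phi_1+\Phi_2+\Phi_3=x^{2^{5m}}$ is a permutation with $\Psi^{-1}(x)=x^{2^m}$, while $\Phi_1^{-1}+\Phi_2^{-1}+\Phi_3^{-1}=x^{2^{5m}}+\Phi_3+\Phi_2=x^{2^{5m}}+(x^{2^m}+x^{2^{5m}})=x^{2^m}=\Psi^{-1}$. Thus $(\mathcal{A}_{6m})$ holds.

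To prove $\Phi_2^{-1}=\Phi_3$ I would apply Proposition~\ref{Prop:trinomials} with $\Phi=\Phi_2$ (coefficient triple $(\alpha,\beta,\alpha+1)$) and $\Psi=\Phi_3$ (coefficient triple $(\alpha+1,\beta,\alpha)$), checking \eqref{EquazioniTrinomi} and \eqref{DeterminanteTrinomi}. After the substitution above, the matrix in \eqref{DeterminanteTrinomi} becomes the $3\times3$ circulant with first row $(\alpha^{2^m}+1,\ \beta^{2^m},\ \alpha^{2^m})$, whose determinant in characteristic $2$ factors as
\[
(1+\beta^{2^m})\bigl((\alpha^{2^m})^{2}+\alpha^{2^m}+(\beta^{2^m})^{2}+\beta^{2^m}+1\bigr).
\]
In case~\ref{I} we have $\beta^{2^m}=\alpha^{2^m}$, and in case~\ref{II} we have $\beta^{2^m}=\alpha^{2^m}+1$; in both the second factor reduces to $1$, while the first factor $1+\beta^{2^m}$ is nonzero since $\beta\ne1$ (indeed $\alpha=1$ contradicts $\alpha^{2^m}+\alpha^{2^m-1}+1=0$ in case~\ref{I}, and $\beta=1$ would force $\alpha=0$, contradicting $\alpha^{2^m+1}=1$ in case~\ref{II}). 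So \eqref{DeterminanteTrinomi} holds. For \eqref{EquazioniTrinomi}, after the substitution each of the three equations reduces to $\alpha^{2^m+1}+\alpha^{2^m}+\alpha=0$ in case~\ref{I} (which is the hypothesis multiplied by $\alpha$) and to $\alpha^{2^m+1}=1$ in case~\ref{II} (which is the hypothesis itself). Hence Proposition~\ref{Prop:trinomials} gives that $\Phi_2$ is a permutation with inverse $\Phi_3$.

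Finally, for $E^{\cup}\ne\mathbb{F}_{2^{6m}}$ I would compute the three sets in \eqref{Eq:E}. From $\Phi_2(x)=\Phi_3(x)$ one gets $x^{2^m}=x^{2^{5m}}$, i.e.\ $x^{2^{2m}}=x$, so $E_{2,3}=\mathbb{F}_{2^{2m}}$. The sets $E_{1,2}$ and $E_{1,3}$ are the sets of roots in $\mathbb{F}_{2^{6m}}$ of $(\alpha+1)x^{2^m}+\beta x^{2^{3m}}+(\alpha+1)x^{2^{5m}}$ and $\alpha x^{2^m}+\beta x^{2^{3m}}+\alpha x^{2^{5m}}$ respectively; since $\alpha\notin\{0,1\}$ in both cases ($\alpha\ne0$, else the relation of case~\ref{I} fails and $\alpha^{2^m+1}=1$ in case~\ref{II}; and $\alpha\ne1$, else again the relation of case~\ref{I} fails, and by assumption in case~\ref{II}), each of these is a nonzero linearized polynomial with nonzero $x^{2^m}$-coefficient and no exponent exceeding $2^{5m}$, hence with at most $2^{4m}$ roots. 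Therefore $|E^{\cup}|\le 2\cdot 2^{4m}+2^{2m}<2^{6m}$ for every $m\ge1$, as required.

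All of this is routine computation; the only thing one really has to see is that hypotheses~\ref{I} and~\ref{II} are precisely tailored so that the second factor of the circulant determinant becomes $1$ and so that the three equations of \eqref{EquazioniTrinomi} collapse to a single norm/trace identity. Accordingly I do not foresee any genuine obstacle — the main effort is the case-by-case substitution in \eqref{EquazioniTrinomi}, which is where care is needed to avoid parity slips.
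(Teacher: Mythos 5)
Your proposal is correct and follows essentially the same route as the paper: you verify that the coefficient $6$-tuple $(\alpha,\beta,\alpha+1,\alpha+1,\beta,\alpha)$ satisfies \eqref{EquazioniTrinomi} and \eqref{DeterminanteTrinomi} so that Proposition \ref{Prop:trinomials} gives $\Phi_2^{-1}=\Phi_3$, then compute $\Psi(x)=x^{2^{5m}}$ and $\Phi_1^{-1}+\Phi_2^{-1}+\Phi_3^{-1}=x^{2^m}=\Psi^{-1}$, and finally bound $E^{\cup}$. The only difference is that you spell out the ``direct checking'' (the circulant determinant factorization and the case-by-case reduction of \eqref{EquazioniTrinomi}) and give a slightly sharper count $|E^{\cup}|\le 2\cdot 2^{4m}+2^{2m}$ than the paper's $3\cdot 2^{4m}$, both of which suffice.
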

\proof
By direct checking the $6$-tuple $(\alpha,\beta,\gamma,\overline{\alpha},\overline{\beta},\overline{\gamma})=(\alpha,\beta, \alpha+1,\alpha+1,\beta,\alpha)$, where $\alpha$ and $\beta$ are as in \eqref{I} or \eqref{II}, satisfies Conditions \eqref{EquazioniTrinomi} and \eqref{DeterminanteTrinomi} therefore  $\Phi_2^{-1}=\Phi_3$. Also, 
$$\Psi(x)=\Phi_1(x)+\Phi_2(x)+\Phi_3(x)=x^{2^m}+x^{2^m}+x^{2^{5m}}=x^{2^{5m}}$$
is a bijection and 
$$\Phi_1^{-1}(x)+\Phi_2^{-1}(x)+\Phi_3^{-1}(x)=x^{2^m}=\Psi^{-1}(x)$$
{over $\mathbb{F}_{2^{6m}}$.}
Hence, $(\Phi_1,\Phi_2,\Phi_3)$ satisfies ($\mathcal{A}_m$). Arguing as in Proposition, \ref{Prop:Fam1} $\left| E^{\cup}\right| \leq 2^{4m}+2^{4m}+2^{4m} < 2^{6m}=|\mathbb{F}_{2^{6m}}|$ and the claim follows.
\endproof

The last part of this section involves permutation quadrinomials. 
\begin{lemma}\label{involution}
Let $q$ be a power of $2$, $C\in\mathbb{F}_q$, $D\in\mathbb{F}_{q^2}$. Then the polynomial
$f(x)=Cx+Dx^q+(C+1)x^{q^2}+Dx^{q^3}$
is an involution over $\mathbb{F}_{q^4}$.
\end{lemma}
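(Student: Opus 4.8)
The plan is to verify directly that $f\circ f(x)=x$ for every $x\in\mathbb{F}_{q^4}$, using the fact that $f$ is a linearized polynomial over the intermediate field $\mathbb{F}_q$: all exponents appearing are powers of $q$, so raising to the $q$-th power cyclically permutes the four "coordinate" Frobenius images $x,x^q,x^{q^2},x^{q^3}$, and $x^{q^4}=x$. Concretely, I would first record the effect of Frobenius on the four monomials: $f(x)^q=Cx^q+Dx^{q^2}+(C+1)x^{q^3}+Dx$ and similarly for $f(x)^{q^2}$ and $f(x)^{q^3}$, keeping in mind that $C\in\mathbb{F}_q$ is fixed by Frobenius while $D\in\mathbb{F}_{q^2}$ satisfies $D^{q^2}=D$, so $D^q=D^{q^3}$ is the (possibly nontrivial) conjugate of $D$.

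Next I would substitute into $f(f(x))=C\,f(x)+D\,f(x)^q+(C+1)\,f(x)^{q^2}+D\,f(x)^{q^3}$ and collect the coefficient of each of $x$, $x^q$, $x^{q^2}$, $x^{q^3}$. For instance, the coefficient of $x$ is $C\cdot C+D\cdot D^q\cdot(\text{from }f(x)^q\text{'s }x\text{-term})+(C+1)\cdot(C+1)+D\cdot D^{q}$; since we are in characteristic $2$, $C^2+(C+1)^2=1$, and the two $D$-contributions $D\cdot D^{q}+D\cdot D^{q}$ cancel, leaving coefficient $1$ — here one must be careful to track exactly which monomial of $f(x)^{q^k}$ contributes $x$, but each such cross term pairs up with an identical term and cancels mod $2$, while the pure $C$-terms telescope to $1$. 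The coefficients of $x^q$, $x^{q^2}$, $x^{q^3}$ are treated the same way: the $C$-only contributions give $C(C+1)+\cdots = 0$ by the characteristic-$2$ identity $C(C+1)+C(C+1)=0$ (or $C^2+C^2=0$), and every mixed $D$-term appears an even number of times and cancels. Summing, $f(f(x))=x$.

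The only mild subtlety — and the step I would write out most carefully — is the bookkeeping of which Frobenius power of $D$ multiplies which monomial, because $D\notin\mathbb{F}_q$ in general, so one cannot naively treat $D$ as a scalar; the saving grace is $D^{q^2}=D$, which means only the two values $D$ and $D^q$ ever occur, and the pattern of exponents is symmetric enough that every $D$-term is matched by an equal one. I expect no genuine obstacle: the result is a routine (if slightly tedious) linearized-polynomial computation, and the characteristic-$2$ cancellations make it clean. One could alternatively phrase it as: write $f=C\sigma^0+D\sigma^1+(C+1)\sigma^2+D\sigma^3$ in the twisted polynomial ring $\mathbb{F}_{q^4}[\sigma]/(\sigma^4-1)$ (with $\sigma$ acting as $x\mapsto x^q$ and the $\mathbb{F}_q$-linearity of $C$, together with $D^{q^2}=D$, controlling the commutation), and check that this element squares to $\sigma^0=\mathrm{id}$; but for a self-contained proof the direct substitution is cleanest and is what I would present.
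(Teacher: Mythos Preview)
Your approach is correct and is essentially identical to the paper's: the paper also writes out $f^2(x)=C\,f(x)+D\,f(x)^q+(C+1)\,f(x)^{q^2}+D\,f(x)^{q^3}$ explicitly (using $C^q=C$, $D^{q^2}=D$, $x^{q^4}=x$) and then observes that in characteristic $2$ everything cancels except the constant term $x$. Your coefficient-by-coefficient bookkeeping is exactly the content of that cancellation, so there is nothing to add.
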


\begin{proof}
By direct computation,
$$ f^2(x)=C\left(Cx+Dx^q+(C+1)x^{q^2}+Dx^{q^3}\right)+D\left(Cx^q+D^qx^{q^2}+(C+1)x^{q^3}+D^q x\right) $$
$$ +(C+1)\left(Cx^{q^2}+Dx^{q^3}(C+1)x+Dx^q\right)+D\left(Cx^{q^3}+D^q x+(C+1)x^q+D^q x^{q^2}\right).$$
Since $q$ is even, this implies $f^2(x)=x$.
\end{proof}

\begin{theorem}\label{Prop:Fam4}
Let $\alpha\in\mathbb{F}_{2^m}$ and $\beta\in\mathbb{F}_{2^{2m}}$ be such that $\beta^{2^m+1}=\alpha^2+1$.
Then the triple
$$(\Phi_1,\Phi_2,\Phi_3)=\left( \alpha x+\beta x^{2^m}, \alpha^2 x + \beta^{2^m+1}x^{2^{2m}} , \alpha^3 x + \alpha^2\beta x^{2^m} + \alpha\beta^{2^m+1} x^{2^{2m}} + \beta^{2^m+2}x^{2^{3m}} \right)$$
satisfies ($\mathcal{A}_{4m}$) and $E^{\cup}\neq \mathbb{F}_{2^{4m}}$.
\end{theorem}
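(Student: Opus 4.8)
The plan is to exploit a hidden cyclic structure: a direct composition check shows that $\Phi_1\circ\Phi_1=\Phi_2$ and $\Phi_1\circ\Phi_1\circ\Phi_1=\Phi_3$, so the triple is really $(\Phi_1,\Phi_1^2,\Phi_1^3)$. Both identities reduce to elementary manipulations exploiting $\alpha^{2^m}=\alpha$ (as $\alpha\in\mathbb{F}_{2^m}$), the hypothesis $\beta^{2^m+1}=\alpha^2+1$, and the cancellation of the $x^{2^m}$ terms in characteristic two. Next I would verify that $\Phi_2(x)=\alpha^2x+\beta^{2^m+1}x^{2^{2m}}$ is an involution of $\mathbb{F}_{2^{4m}}$: composing it with itself, using $x^{2^{4m}}=x$ and $\beta^{2^m+1}\in\mathbb{F}_{2^m}$, the linear coefficient becomes $\alpha^4+(\beta^{2^m+1})^2=\alpha^4+(\alpha^2+1)^2=1$ and the middle term cancels. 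Hence $\Phi_1^4=\Phi_2^2=\mathrm{id}$; in particular $\Phi_1,\Phi_2,\Phi_3$ are all permutations, with $\Phi_1^{-1}=\Phi_3$, $\Phi_2^{-1}=\Phi_2$ and $\Phi_3^{-1}=\Phi_1$.

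For property $(\mathcal{A}_{4m})$ it then remains to treat $\Psi=\Phi_1+\Phi_2+\Phi_3$. Summing the coefficients gives
$$\Psi(x)=(\alpha+\alpha^2+\alpha^3)x+\beta(\alpha+1)^2x^{2^m}+(\alpha+1)^3x^{2^{2m}}+\beta(\alpha+1)^2x^{2^{3m}},$$
which is precisely the polynomial of Lemma \ref{involution} with $q=2^m$, $C=\alpha+\alpha^2+\alpha^3\in\mathbb{F}_{2^m}$ (so that $C+1=(\alpha+1)^3$) and $D=\beta(\alpha+1)^2\in\mathbb{F}_{2^{2m}}$. Therefore $\Psi$ is an involution of $\mathbb{F}_{2^{4m}}$; in particular it is a permutation, which is condition (1). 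Condition (2) is then immediate: $\Phi_1^{-1}+\Phi_2^{-1}+\Phi_3^{-1}=\Phi_3+\Phi_2+\Phi_1=\Psi=\Psi^{-1}$.

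To see that $E^{\cup}\ne\mathbb{F}_{2^{4m}}$ I would use the cyclic structure once more. Since $\Phi_1$ is bijective, $\Phi_1(x)=\Phi_1^2(x)$ iff $x\in\mathrm{Fix}(\Phi_1)$, $\Phi_1(x)=\Phi_1^3(x)$ iff $x\in\mathrm{Fix}(\Phi_1^2)=\mathrm{Fix}(\Phi_2)$, and $\Phi_1^2(x)=\Phi_1^3(x)$ iff $x\in\mathrm{Fix}(\Phi_1)$; whence $E^{\cup}=\mathrm{Fix}(\Phi_1)\cup\mathrm{Fix}(\Phi_2)$. Now $\mathrm{Fix}(\Phi_1)$ is the set of zeros of the linearized polynomial $(\alpha+1)x+\beta x^{2^m}$, hence has at most $2^m$ elements, while $\mathrm{Fix}(\Phi_2)$ is the set of zeros of $(\alpha^2+1)(x+x^{2^{2m}})$, i.e.\ the subfield $\mathbb{F}_{2^{2m}}$. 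Thus $|E^{\cup}|\le 2^m+2^{2m}<2^{4m}$. Here one works in the nondegenerate case $\beta\ne0$, equivalently $\alpha\ne1$; note that the hypothesis $\beta^{2^m+1}=\alpha^2+1$ forces $\beta=0$ exactly when $\alpha=1$, in which case the triple collapses to $(x,x,x)$ and should be excluded.

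The calculations are all routine; the only genuine insight is recognising the triple as $(\Phi_1,\Phi_1^2,\Phi_1^3)$ with $\Phi_1$ of order $4$, and matching $\Psi$ to Lemma \ref{involution}. The one point that needs care is the degenerate value $\alpha=1$ in the statement about $E^{\cup}$, which I would flag explicitly.
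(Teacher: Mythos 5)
Your proof is correct and follows essentially the same route as the paper: it recognizes the triple as $(\Phi_1,\Phi_1^2,\Phi_1^3)$, uses Lemma \ref{involution} (or the equivalent direct computation) to see that $\Phi_2$ and $\Psi$ are involutions, and then deduces property $(\mathcal{A}_{4m})$ and the bound on $E^{\cup}$. As a minor bonus, your fixed-point argument gives the sharper estimate $|E^{\cup}|\le 2^m+2^{2m}$ (versus the paper's $2^m+2^{2m}+2^{3m}$), and your explicit exclusion of the degenerate case $\alpha=1$, $\beta=0$ — where the triple collapses to $(x,x,x)$ and $E^{\cup}=\mathbb{F}_{2^{4m}}$ — is a legitimate caveat that the paper's statement silently overlooks.
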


\begin{proof}
By direct checking, $\Phi_i(x)=\Phi_1^i(x)$ for all $x\in\mathbb{F}_{2^{4m}}$, $i=2,3$, and $\Phi_2(x)$ is an involution over $\mathbb{F}_{2^{4m}}$ by Lemma \ref{involution}, so that $\Phi_3(x)=\Phi_1^{-1}(x)$ over $\mathbb{F}_{2^{4m}}$.
Also, for all $x\in\mathbb{F}_{2^{4m}}$, we have
$$ \Psi(x)= \Phi_1(x)+\Phi_2(x)+\Phi_3(x) = (\alpha+\alpha^2+\alpha^3)x + (\alpha^2+1)\beta x^{2^m} + (1+\alpha+\alpha^2+\alpha^3)x^{2^{2m}}+(\alpha^2+1)\beta x^{2^{3m}} $$
and
$$ \Phi_1^{-1}(x)+\Phi_2^{-1}(x)+\Phi_3^{-1}(x)=\Psi(x)=\Psi^{-1}(x). $$
Thus, $(\Phi_1,\Phi_2,\Phi_3)$ satisfies ($\mathcal{A}_{4m}$) and $|E^{\cup}|\leq 2^m + 2^{2m}+2^{3m}<2^{4m}=| \mathbb{F}_{2^{4m}}|$. The claim follows.
\end{proof}

\begin{theorem}\label{Prop:Fam5}
Let $C\in\mathbb{F}_{2^m}$, $D\in\mathbb{F}_{2^{2m}}$, and $\lambda\in\mathbb{F}_{2^{2m}}$ be such that $\lambda^{2^m+1}=1$. Then the triple
$$(\Phi_1,\Phi_2,\Phi_3)=\left( \lambda x^{2^m}, \lambda x^{2^{3m}} , Cx+Dx^{2^m}+(C+1)x^{2^{2m}}+Dx^{2^{3m}} \right)$$
satisfies ($\mathcal{A}_{4m}$) and $E^{\cup}\neq \mathbb{F}_{2^{4m}}$.
\end{theorem}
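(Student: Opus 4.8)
The plan is to verify property $(\mathcal{A}_{4m})$ first and then bound $E^{\cup}$. To lighten notation put $q=2^{m}$, so we work over $\mathbb{F}_{q^{4}}=\mathbb{F}_{2^{4m}}$, with $C\in\mathbb{F}_{q}$, $D\in\mathbb{F}_{q^{2}}$, $\lambda\in\mathbb{F}_{q^{2}}$ and $\lambda^{q+1}=1$ (so $\lambda\neq0$ and $\lambda^{q}=\lambda^{-1}$).

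\smallskip\noindent\emph{Step 1: the three inverses.} $\Phi_{1}$ and $\Phi_{2}$ are clearly permutations, and a one-line computation identifies their inverses:
$$\Phi_{2}\big(\Phi_{1}(x)\big)=\lambda\big(\lambda x^{q}\big)^{q^{3}}=\lambda^{1+q^{3}}x^{q^{4}}=\lambda^{1+q}x=x,$$
using $\lambda^{q^{2}}=\lambda$ (hence $\lambda^{q^{3}}=\lambda^{q}$) and $x^{q^{4}}=x$ on $\mathbb{F}_{q^{4}}$. Thus $\Phi_{1}^{-1}=\Phi_{2}$ and $\Phi_{2}^{-1}=\Phi_{1}$. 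Since $C\in\mathbb{F}_{q}$ and $D\in\mathbb{F}_{q^{2}}$, Lemma~\ref{involution} (applied with this $q$) shows that $\Phi_{3}$ is an involution of $\mathbb{F}_{q^{4}}$, so it is a permutation with $\Phi_{3}^{-1}=\Phi_{3}$.

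\smallskip\noindent\emph{Step 2: the sum is again of the same shape.} Adding the three maps,
$$\Psi(x)=\Phi_{1}(x)+\Phi_{2}(x)+\Phi_{3}(x)=Cx+(\lambda+D)x^{q}+(C+1)x^{q^{2}}+(\lambda+D)x^{q^{3}},$$
which is exactly the polynomial of Lemma~\ref{involution} with $D$ replaced by $\lambda+D\in\mathbb{F}_{q^{2}}$ (the $x$-coefficient is still $C\in\mathbb{F}_{q}$ and the $x^{q^{2}}$-coefficient is $C+1$). Hence $\Psi$ is an involution of $\mathbb{F}_{q^{4}}$; in particular $\Psi$ is a permutation with $\Psi^{-1}=\Psi$. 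Combining Steps 1 and 2,
$$\Phi_{1}^{-1}+\Phi_{2}^{-1}+\Phi_{3}^{-1}=\Phi_{2}+\Phi_{1}+\Phi_{3}=\Psi=\Psi^{-1},$$
so both conditions of Definition~\ref{Def:Phi} hold and $(\mathcal{A}_{4m})$ is established.

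\smallskip\noindent\emph{Step 3: the set $E^{\cup}$.} First, $E_{1,2}=\{x:\lambda x^{q}=\lambda x^{q^{3}}\}=\{x:x^{q^{2}}=x\}=\mathbb{F}_{q^{2}}$, of size $q^{2}=2^{2m}$. The sets $E_{1,3}$ and $E_{2,3}$ are the kernels in $\mathbb{F}_{q^{4}}$ of the $q$-linearized polynomials $\Phi_{1}+\Phi_{3}$ and $\Phi_{2}+\Phi_{3}$; these are nonzero, since their $x$- and $x^{q^{2}}$-coefficients are $C$ and $C+1$, which cannot both vanish. I would bound each kernel by $q^{2}$ by showing the corresponding $\mathbb{F}_{q}$-linear map of $\mathbb{F}_{q^{4}}$ (a $4$-dimensional $\mathbb{F}_{q}$-space) has rank at least $2$: the associated $4\times4$ Dickson matrix over $\mathbb{F}_{q^{4}}$ has the same rank as the linear map, and — because $C$ and $C+1$ lie in $\mathbb{F}_{q}$ and are fixed by the relevant Frobenius powers — it contains the submatrix $\left(\begin{smallmatrix}C&C+1\\ C+1&C\end{smallmatrix}\right)$, of determinant $C^{2}+(C+1)^{2}=1\neq0$ in characteristic $2$. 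Hence $|E_{1,3}|,|E_{2,3}|\le q^{4-2}=2^{2m}$, and
$$|E^{\cup}|\le|E_{1,2}|+|E_{1,3}|+|E_{2,3}|\le 3\cdot 2^{2m}<2^{4m}\qquad(m\ge1),$$
which proves the theorem.

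\smallskip\noindent The inverse computations and the recognition of $\Psi$ as an instance of Lemma~\ref{involution} are routine; the real obstacle is Step 3. The naive degree bound only gives $|E_{1,3}|,|E_{2,3}|\le 2^{3m}$, which is insufficient when $m=1$ (it yields only $|E^{\cup}|\le 2^{2m}+2\cdot 2^{3m}=20>16$), so the rank-$\ge2$ estimate is genuinely needed. If one prefers to avoid Dickson matrices, an alternative for $i\in\{1,2\}$ is to note that $x\mapsto(\Phi_{i}+\Phi_{3})(x)+\big((\Phi_{i}+\Phi_{3})(x)\big)^{q^{2}}$ simplifies on $\mathbb{F}_{q^{4}}$ to $x+\lambda x^{q}+x^{q^{2}}+\lambda x^{q^{3}}=u+\lambda u^{q}$ with $u=\mathrm{Tr}_{\mathbb{F}_{q^{4}}/\mathbb{F}_{q^{2}}}(x)$; since $\lambda^{q+1}=1$ forces $\lambda$ to be a $(q-1)$-th power, the map $u\mapsto u+\lambda u^{q}$ has a kernel of size $q$ on $\mathbb{F}_{q^{2}}$, and analysing $E_{i,3}$ coset-by-coset of $\mathbb{F}_{q^{2}}$ again yields $|E_{i,3}|\le q\cdot q=2^{2m}$, at the cost of heavier bookkeeping.
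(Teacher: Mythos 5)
Your proof is correct, and while Steps 1--2 coincide with the paper's verification of $(\mathcal{A}_{4m})$ (namely $\Phi_2=\Phi_1^{-1}$ from $\lambda^{2^m+1}=1$, and Lemma~\ref{involution} applied twice, to $\Phi_3$ and to $\Psi$ with $D$ replaced by $D+\lambda$), your Step 3 takes a genuinely different route for bounding $E^{\cup}$. The paper only uses the crude degree bound $|E_{i,3}|\le 2^{3m}$, which forces a case split: it suffices for $m\ge 2$, while for $m=1$ the paper falls back on an explicit check in $\mathbb{F}_{16}$ (that $\gcd(f(T),T^{16}+T)$ has degree at most $4$, together with $\{0,\lambda^2\}\subseteq E_{1,2}\cap E_{1,3}$). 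You instead prove the uniform bound $|E_{1,3}|,|E_{2,3}|\le 2^{2m}$ by showing the linearized polynomials $\Phi_i+\Phi_3$ ($i=1,2$) have rank at least $2$ as $\mathbb{F}_{2^m}$-linear maps, either via the nonvanishing minor $\bigl(\begin{smallmatrix} C & C+1\\ C+1 & C\end{smallmatrix}\bigr)$ of the Dickson matrix, or via your coset analysis based on $u\mapsto u+\lambda u^{2^m}$ on $\mathbb{F}_{2^{2m}}$; both variants are sound (I checked the coset version: kernel solutions lie in at most $2^m$ cosets of $\mathbb{F}_{2^{2m}}$, each containing at most $2^m$ of them), and they give $|E^{\cup}|\le 3\cdot 2^{2m}<2^{4m}$ for every $m\ge 1$ with no special case at $m=1$. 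This buys a cleaner and stronger estimate, and incidentally sidesteps the paper's harmless miscount of $|E_{1,2}|$ as $2^m$ rather than $2^{2m}$; the only cost is that the equality between the rank of a linearized polynomial and the rank of its Dickson matrix is an external standard fact that should be cited, or simply replaced by your self-contained coset argument.
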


\begin{proof}
From $\lambda^{2^m+1}=1$ follows $\Phi_2(x)=\Phi_1^{-1}(x)$ for all $x\in\mathbb{F}_{2^{4m}}$. Also, $\Phi_3(x)$ is an involution over $\mathbb{F}_{2^{4m}}$ by Lemma \ref{involution}. Then 
$$ \Psi(x)= \Phi_1(x)+\Phi_2(x)+\Phi_3(x) = Cx+(D+\lambda)x^{2^m}+(C+1)x^{2^{2m}}+(D+\lambda)x^{2^{3m}} $$
is an involution over $\mathbb{F}_{2^{4m}}$ by Lemma \ref{involution}, and
$$ \Psi^{-1}(x)=\Phi_1^{-1}(x)+\Phi_2^{-1}(x)+\Phi_3^{-1}(x)=\Psi(x).$$
Thus, $(\Phi_1,\Phi_2,\Phi_3)$ satisfies ($\mathcal{A}_{4m}$).

If $m \geq 2$, then $|E^{\cup}|\leq 2^m + 2^{3m}+2^{3m}<2^{4m}=| \mathbb{F}_{2^{4m}}|$.
Suppose $m=1$. Then
$ E_{1,2}=\{0,1,\lambda,\lambda^2\}$ has size at most $4$. Also, the elements of $E_{1,3}$ are the roots in $\mathbb{F}_{16}$ of $f(T)=DT^8+(C+1)T^4+(D+\lambda)T^2+CT$. By direct checking, $\gcd(f(T),T^{16}+T)$ has order at most $4$. Since $\{0,\lambda^2\}\subseteq E_{1,2}\cap E_{1,3}$, the size of $E_{1,2}\cup E_{1,3}$ is at most $6$.
Thus, $E^{\cup}$ has size at most $6+8<16$ and the claim follows.
\end{proof}



\section{Acknowledgments}
The research of D. Bartoli, M. Montanucci, G. Zini was partially supported  by the Italian National Group for Algebraic and Geometric Structures and their Applications (GNSAGA - INdAM).


\end{document}